\def\@begintheorem#1#2{\par\bgroup{\bf #1\ \bf {#2}. }\it\ignorespaces}
\def\@opargbegintheorem#1#2#3{\par\bgroup{\bf #1\ \bf #2\ (#3). }\it\ignorespaces}
\def\@endtheorem{\egroup}
\newtheorem{theorem}{Theorem}[section]
\newtheorem{lemma}[theorem]{Lemma}
\newtheorem{remark}[theorem]{Remark}
\newcommand{\fer}[1]{{(\ref{#1})}}
\def\RR{\mathbb{R}}
\def\R{\mathbb{R}}
\def\RR{\hbox{{\tiny \rm I}\kern-.1em\hbox{{\tiny \rm R}}}}
\def\NN{\hbox{I\kern-.2em\hbox{N}}}
\newcommand{\mF}{\mathcal F}
\newcommand{\mR}{\mathcal R}
\newcommand{\mN}{\mathcal N}
\newcommand{\mH}{\mathcal H}
\newcommand{\mI}{\mathcal I}
\newcommand{\mRH}{\mathcal R\mathcal H}
\newcommand{\be}{\begin{equation}}
\newcommand{\ee}{\end{equation}}
\newcommand{\bee}{\begin{equation*}}
\newcommand{\eee}{\end{equation*}}
\newcommand{\beqa}{\begin{eqnarray}}
\newcommand{\eeqa}{\end{eqnarray}}
\newcommand{\beqao}{\begin{eqnarray*}}
\newcommand{\eeqao}{\end{eqnarray*}}
\title[Renyi entropy and nonlinear diffusions]{Renyi entropy and improved equilibration rates to self-similarity for nonlinear diffusion equations}
\author{J. A. Carrillo}
\address{Department of Mathematics, Imperial College London,
South Kensington Campus, London SW7 2AZ, UK.}
\email{carrillo@imperial.ac.uk}
\author{G. Toscani}
\address{Department of Mathematics,
University of Pavia, via Ferrata 1, 27100 Pavia, ITALY.}
\email{giuseppe.toscani@unipv.it}
\begin{document}

\hyphenation{bounda-ry rea-so-na-ble be-ha-vior pro-per-ties cha-rac-te-ris-tic}

\maketitle

\begin{abstract}
We investigate the large-time asymptotics of nonlinear diffusion equations
$u_t = \Delta u^p$ in dimension $n \ge 1$, in the exponent interval $p >
n/(n+2)$, when the initial datum $u_0$ is of bounded second moment.
Precise rates of convergence to the Barenblatt profile in terms of the
relative R\'enyi entropy are demonstrated for finite-mass solutions defined
in the whole space when they are re-normalized at each time $t> 0$ with
respect to their own second moment, as proposed in \cite{Tos05, CDT05}.
The analysis shows that the relative R\'enyi entropy exhibits a better
decay, for intermediate times, with respect to the standard Ralston-Newton
entropy. The result follows by a suitable use
of the so-called concavity of R\'enyi entropy power, recently proven in
\cite{ST}.
\end{abstract}

\vskip 3mm

{\small \noindent {\bf 2000 AMS subject classification.} 35K55, 35K60, 35K65, 35B40.

\

\small \noindent {\bf Key words.} Nonlinear diffusion equations,
long-time behavior, R\'enyi entropy.

\


\section{Introduction}

In this work, we study the large-time behavior of the positive
solutions $v(x,t)$ of the nonlinear diffusion equation
\begin{equation}
{\partial v \over \partial t} = \Delta v^p ,\qquad (x\in \RR^n,
t>0), \label{poro}
\end{equation}
with initial data
\begin{equation*}
v(x,t=0) = v_0(x) \geq 0, \qquad (x \in \RR^n) \label{por2}.
\end{equation*}
Our analysis will be restricted to initial data with finite kinetic energy
(second moment), and it will include both the case $p>1$, (Porous
medium equation (PME)), and the case $p<1$ (Fast diffusion (FD)). We
work in dimensions $n\ge 1$, and the range of exponents in the fast
diffusion case is $\bar p < p < 1$ with $\bar p = n/(n+2)$, which is
a part of the so-called fast diffusion range $p < 1$. The particular
subinterval of $p$ is motivated by the fact that, in this range of the parameter,
the so-called Barenblatt solution \cite{Ba1, Bar, ZK}, which
serves as the model for the asymptotic behavior of a wide class of
solutions, has finite second moment. In the fast diffusion case,
the Barenblatt solution is given by
\begin{equation}\label{bar}
\mathcal{B}_M (x,t) = t^{-n/\lambda} B(|x|t^{-1/\lambda})= \left( \frac{Ct}{|x|^2 + A t^{2/\lambda}}
\right)^{1/(1-p)},
\end{equation}
where $\lambda = 2- n(1-p)$  and $C= 2p\lambda/(1-p)$ are fixed
positive constants. The polynomial decay of $B(r)$ as $r \to \infty$ establishes a relation between the parameter $p$ and the number of its moments which remain bounded. In case $p > 1-2/n$, the Barenblatt solution is integrable, and in this case the  constant $A>0$ is fixed
by mass conservation
$$
\int_{\RR^n} B(|x|)\,dx = M , \quad t \ge 0\ .
$$
Then the second moment of $\mathcal{B}_M (x,t)$ remains bounded, so that
$$
\int_{\RR^n}|x|^2 \mathcal{B}_M (x,t)\,dx = t^{-2/\lambda}\int_{\RR^n} |x|^2 B(|x|)\,dx, \quad t >0
$$
whenever $p > \bar p$. The solution to equation \fer{poro} satisfies mass and momentum
conservations, so that
\be\label{cons}
  \int_{\RR^n} v(x,t)\, dx =1 \, ; \quad    \int_{\RR^n} x \, v(x,t)\, dx =  0 \,;  \quad t \ge 0.
\ee
Hence, without loss of generality, one can always assume that
$v_0(x)$ is a probability density of first moment equal to zero.

The conservation properties of the solution play an important role
in connection with the rate of decay towards the self-similar
Barenblatt solution. This question, motivated by the search of
improved convergence rates when moment conditions are imposed in the
framework of Wasserstein,  has been addressed in \cite{CDT05,DMC1,DMC2}.
As far as the conservation of first
moment (position of center of mass) is concerned, the question of
improved rates has been precisely formulated in \cite{CDT05}, and
solved in \cite{BDGV,DMCK} in the weighted $L^2$-framework. Indeed, for
initial data satisfying \fer{cons}, it has been found in
\cite{CDT05} that the rate of convergence of solutions to the
one-dimensional porous medium equation towards a self-similar
profile improves by considering Barenblatt solutions satisfying
\fer{cons}.

These results suggested that an improved rate of decay towards
Barenblatt solutions could be obtained only by choosing moments in a
suitable way. However, since mass and momentum are the only
conserved quantities, further improvements which involve
conservation of higher order moments could be achieved only asymptotically
\cite{BDGV,DMCK} due to linearizations. Here, we explore how to get this improvement
at an initial stage based on scaling arguments.

Let us define the energy $E(v(t))$ of a solution $v(x,t)$ as its
second moment
\bee\label{temp}
 E(v(t)) =  \int_{\RR^n}|x|^2v(x,t)\,dx\, .
\eee
Then, $E(v(t))$ increases in time from $E_0 = E(v_0)$, and its
evolution is given by the nonlinear law
\be\label{ene}
 \frac{dE(v(t))}{dt} = 2n \int_{\RR^n}v^p(x,t)\, dx \ge 0,
\ee
which is not explicitly integrable unless $p=1$. The monotonicity in time of the energy suggests to change variables with the ansatz
 \be\label{cv}
v(x,t) = \left( \frac{E(v(t))}{E_0}\right)^{-n/2} u\left[x \left(
\frac{E(v(t))}{E_0}\right)^{-1/2}, \tau(t) \right],
 \ee
where
 \be\label{new-t}
\tau(t) = \frac{E_o}{2n} \log \left(\frac{E(v(t))}{E_0} \right), \quad
\tau(0) = 0.
 \ee
With this scaling,
 \[
E(v(t)) = \int_{\RR^n} |x|^2 \, v(x,t) \, dx =
\frac{E(v(t))}{E_0}\int_{\RR^n} |y|^2 \, u(y,\tau) \, dy.
 \]
Therefore, for any time $\tau >0$
 \bee\label{new-ev}
\int_{\RR^n} |y|^2 \, u(y,\tau) \, dy = E_0 = \int_{\RR^n} |x|^2
\, v_0(x) \, dx,
 \eee
and the scaled function $u(y,\tau)$,  where $ y = ({E(v(t))}/{E_0})^{1/2} x$,
maintains its second moment constant in time.

The nonlinear scaling  \fer{cv} has a robust meaning. As outlined in
\cite{CV07},  the second moment corresponds to the kinetic energy of
the probability distribution in kinetic theory, where $x$ represents
speed, $v$ is the particle density and $E(v)$ is the kinetic
energy. In fact, solutions where the only dependence on time is
through their temperature are of asymptotic importance in several
kinetic models for which related scalings have been used, see for
instance \cite{BCT05,BCT06,CTPE}. On the other hand, this scaling
corresponds to the usual normalization for the distribution function
of the sum of  independent random variables with common distribution
function of fixed variance, to obtain the central limit theorem (see
\cite{Tos05} for more comments about this analogy and relations to
the heat equation). Moreover, since in the original variables the
second moment of the solution to equation \fer{poro} diverges in time, then
scaling back with the second moment will hopefully drive the
solutions to a limit from a geometric perspective.

In the field of nonlinear diffusion equations this idea  has been at the basis of the  study of the large-time behavior of a
nonlinear diffusion equation with a general nonlinearity, of type
$u_t = \Delta\Phi(u)$ \cite{CDT05}.  More recently,  the same
scaling has been successfully applied to prove oscillatory behavior
for finite-mass solutions defined in the whole space, for particular
choices of the function $\Phi$ in the diffusion  \cite{CV07}. Also, in the fast
diffusion regime, improvements based on the second moment were also
achieved in \cite{DT}. The strategy in \cite{DT} was to introduce an
implicit scaling different from \fer{cv}, which allows to handle in a
tricky way the relative entropy functional by Newton and Ralston
\cite{New,Ral}
\be\label{rel-e}
\mF_p(u|B) = \frac 1{p-1} \int_{\RR^n} \left[ u^p-B^p
-pB^{p-1}(u-B) \right]\, dx,
\ee
commonly named \emph{relative entropy}.

In the case under study, the nonlinear scaling \fer{cv} implies that
the new function $u(y,\tau)$ satisfies the nonlinear Fokker-Planck
type equation
 \be\label{FPR}
\frac{\partial u}{\partial \tau} = \frac{\Delta_y u^p}{\int_{\RR}
u^p \, dy}+ \frac n{E_0} \nabla_y \cdot (y\, u).
 \ee
Note that \fer{new-t} implies that  $\tau(0) = 0$, while $y(0) =x$. Hence, the initial data
of the Fokker-Planck equation \fer{FPR} coincide with the initial data of
the nonlinear diffusion in original variables \fer{poro}.

At a first look, equation \fer{FPR} appears more difficult to treat
than the original nonlinear diffusion, due to the presence of the
unknown denominator of the second order term. This apparent
difficulty is balanced by the advantage to have a solution which
maintains constant its moments up to the order two. Indeed, by
construction, for all times $\tau >0$ the solution of equation \fer{FPR}
satisfies
 \be\label{norm1}
\int_{\RR^n}  u(y,\tau) \, dy = 1 , \quad \int_{\RR^n} y \, u(y,\tau) \,
dy = 0, \quad \int_{\RR^n} |y|^2 \, u(y,\tau) \, dy = E_0.
 \ee
It is easy to show that the stationary solutions of \fer{FPR} are
Barenblatt solutions of type \fer{bar}. For example, if $p <1$,
these solutions can we written as
 \bee\label{bar1}
B(|y|) = \left(\frac{C_p}{A_p + |y|^2} \right)^{1/(1-p)},
 \eee
where the  positive constants $A_p$ and $C_p$ are chosen to satisfy the
normalization conditions \fer{norm1}.

Generalized Fokker-Planck equations of type \fer{FPR} has been introduced
into the physical literature in \cite{PP}, and subsequently studied in a
series of papers (cf. the references in \cite{FD}), by studying
generalized entropies and the consequent statistics. In particular, it has been highlighted in
\cite{FD} the link between the Fokker-Planck type
equation \fer{FPR} and the R\'enyi entropy of order $p$ of a probability
density $f$ \cite{DCT}, defined by
 \be\label{re}
\mR_p(f) = \frac 1{1-p} \log\left( \int_{\RR^n} f^p(y) \, dy \right).
 \ee
The R\'enyi entropy of order $1$ is defined as the limit as $p \to 1$ of
$\mR_p(f)$. It follows directly from definition \fer{re} that
 \be\label{shan}
 \lim_{p\to 1} \mR_p(f) = \mR(u) = -\int_{\RR^n}f(y) \log f(y) \, dy .
 \ee
Then, the (Shannon) entropy is identified with the R\'enyi entropy of
order 1.

Coupled to the definition of R\'enyi entropy of order $p$, is the definition of
relative R\'enyi entropy of order $p$, valid for any pair of probability density
functions $f$ and $g$. In contrast to the universal concept of relative Shannon
entropy, this definition is not unique in the case of R\'enyi entropies, even if all
definitions converge to the relative Shannon one as $p \to 1$. In this paper, we
consider as relative R\'enyi entropy of order $p$ the quantity
 \be\label{r-re}
\mH_p(f|g) = \frac 1{(p-1)}\log  \int f^p \, dy + \log \int g^p \, dy  - \frac
p{(p-1)}\log \int g^{p-1}f \, dy.
 \ee
For example, in \cite{LYZ} the relative R\'enyi entropy is defined as $\mH_p(f|g)/p$.
The relative R\'enyi entropy of order $p$ is a nonnegative quantity (cf. the proof in
\cite{LYZ}),
 and the relative Shannon entropy is obtained as the limit of $\mH_p(f|g)$,
when $p \to 1$
 \be\label{rsh}
\lim_{p\to 1} \mH_p(f|g)  = \mH(f|g) = \int_{\RR^n}f(y) \log
\frac{f(y)}{g(y)} \, dy .
 \ee
By considering the R\'enyi entropy of the solution to equation \fer{FPR}
relative to its Barenblatt stationary solution, named $B$, since both
functions satisfy the normalization conditions \fer{norm1}, the relative
R\'enyi entropy is bounded from above by the (positive) difference between
the R\'enyi entropies  of the Barenblatt and that of the solution
 \bee\label{b1}
\mH_p(u(\tau)|B) \le \mRH_p(u(\tau)) :=  \mR_p(B)- \mR_p(u(\tau)),
 \eee
where equality holds if $p\le 1$.
Moreover, we will show that $\mRH_p(u)$ satisfies a Csiszar-Kullback type inequality
\cite{csi,kull}
 \bee\label{ck}
\mRH_p(u) \ge C(p,u)\left( \int_{\RR^n} |u- B|\, dx\right)^2.
  \eee
Our main result is exactly concerned with the time decay of
$\mRH_p(u(\tau))$. We will prove that
 \be\label{dec1}
\mRH_p(u(\tau)) \le -\frac n{2} \log \left[1 -\left(1 - \exp\left\{ - \frac
2{n}\mRH_p(v_0)  \right\} \right) \exp\left\{ - \frac {2n}{E_0}\tau  \right\} \right]\,.
 \ee
The decay \fer{dec1} indicates that in the first part of the time
axis, the convergence towards the Barenblatt solution with the same
moments up to order two, is faster that the one predicted in
terms of the Newton-Ralston relative entropy \fer{rel-e}. Then, for
larger times, the rate of decay stabilizes on the same rate
predicted by the Newton-Ralston relative entropy. Reverting to the
old variables, \fer{dec1} shows that the polynomial rate of convergence of the
solution to \fer{poro} towards the self-similar Barenblatt solution
is faster by choosing the Barenblatt with the same second moment of
the solution $v(t)$.  Then, since the second moment of the solution
to \fer{poro} converges towards the second moment of the
corresponding Barenblatt \cite{Tos05}, this improvement is lost for
large times. A similar behavior was already observed in \cite{ACK} for the heat equation using
the Shannon entropy by optimizing over Gaussians with moments equal up to order two.

The study of the large--time behavior of the solution to \fer{poro},
and its convergence towards the self--similar solution \fer{bar} is
a classical problem which has been intensively studied from many
years now. When $p>1$, this behavior has been described in dimension
$1$ in \cite{Kam1, Kam2, Va1}, in a $L^\infty$-setting. These
results have been subsequently generalized to the case $n>1$
\cite{FrKa,Va}.  Among other results, it is remarked in \cite{Va}
that the assumption of finite moments is crucial to obtain
convergence with rate. In fact, it is proved that for general data
in $L^1$ no rate can exist.

In \cite{CaTo, DD, Ot}, a different approach led to new results in
the whole space. The entropy dissipation method requires that the initial data possess a
sufficiently high number of moments (typically $2 + \delta$ for
$\delta >0$), and it is based on a suitable scaling of the solution,
which allows to rephrase the problem of convergence of the solution
towards self-similarity for \fer{poro} in terms of the convergence
of the solution to a Fokker-Planck type equation towards its steady
state. The argument usually used in this context is the so-called
\emph{entropy--production} method \cite{ACMTU, AMTU}, which relies
in the study of the relative entropy functional  \fer{rel-e}.

F.~Otto made the link with gradient flows with respect to the
Wasserstein distance in \cite{Ot} allowing to work with initial data
assuming only finite kinetic energy. D. Cordero-Erausquin,
B.~Nazaret and C.~Villani gave a proof of the corresponding
Gagliardo-Nirenberg inequalities using mass transportation
techniques in \cite{CNV}. In the fast diffusion case, these results
are restricted to the range $p > \bar p$, due to the fact
that, to work with Wasserstein's distance, it is crucial to have
second moments bounded (see for instance \cite{DMC1,DMC2,DMCK}). Further
results relative to the case $p <\bar p$ have been presented in a
series of recent papers (see \cite{BBDGV-CRAS,BBDGV,BDGV,BGV})
together with a clarification of the strategy of linearization of
the relative entropies, at least from the point of view of
functional inequalities.

The paper is organized as follows. In Section \ref{sec:FP} we use
the time-dependent scaling \fer{cv} to obtain the Fokker-Planck
equation \fer{FPR}, and we describe in details its steady solutions,
together with their main properties.

The improved rate of decay is derived in Section \ref{sec:linear}, where the strategy
is presented in the linear case $p=1$, which allows for explicit computations. In this
case, the decay is given in terms of the classical relative Shannon entropy  defined
in \fer{rsh}. The analysis of the linear case outlines in a clear way the connection
between the rate of decay of the relative Shannon relative entropy and the concavity
property of Shannon entropy power with respect to the solution to the heat equation
\cite{Cos,Vil}. The consequences of the concavity of Shannon entropy power have been
recently discussed in \cite{To1}, where the connections between this property and the
logarithmic Sobolev inequality have been clarified. The main properties of R\'enyi
entropies are listed in Section \ref{sec:renyi}. In particular, it is shown that
R\'enyi entropies constitute the natural entropies for the study of the Fokker-Planck
equation \fer{FPR}. In addition, Csiszar-Kullback type inequalities are obtained both
in the fast diffusion and in the porous medium case. Last, Section \ref{sec:nl}
contains the proof of the main result.


\section{Scaled Fokker-Planck equation and steady states}
\label{sec:FP}   

The first argument we will deal on is the derivation of the
Fokker-Planck equation \fer{FPR}, which is obtained from the
nonlinear diffusion \fer{poro} under the scaling (\ref{cv}-\ref{new-t}). Elementary computations allow to obtain from equation \fer{poro} the equation satisfied by $u(y, \tau)$. It holds
 \begin{align*}
\frac{\partial v}{\partial t} = \,& -\frac n2 \left( \frac
E{E_0}\right)^{-n/2-1} \frac{dE}{dt}\frac 1{E_0} u + \left( \frac
E{E_0}\right)^{-n/2} \frac{\partial u}{\partial \tau}\tau'(t)
 \\
&-\frac 12 \left( \frac E{E_0}\right)^{-(n+3)/2}\frac{dE}{dt}\frac
1{E_0}x\cdot\nabla_y u
 \\
=\,& -\frac 12 \left( \frac E{E_0}\right)^{-n/2-1} \frac
1{E_0}\frac{dE}{dt} \nabla_y\cdot(y u) +  \left( \frac
E{E_0}\right)^{-n/2} \frac{\partial u}{\partial \tau}\tau'(t).
 \end{align*}
Moreover, since
 \[
\Delta_x v^p =  \left( \frac E{E_0}\right)^{-np/2-1} \Delta_y u^p\,,
 \]
together with \fer{ene}, we have
 \[
\frac{dE}{dt} = 2n \int v^p\, dx = 2n \left( \frac
E{E_0}\right)^{-n(p-1)/2} \int u^p \, dy\,.
 \]
Then, we obtain that the nonlinear diffusion \fer{poro} is equivalent to
 \bee\label{new-e}
 \left( \frac E{E_0}\right)^{n(p-1)/2+1}\frac{\partial u}{\partial
 \tau}\tau'(t) = \Delta_y u^p + \frac n{E_0} \nabla_y\cdot(y u) \int u^p \,
 dy.
 \eee
Last, we choose
 \be\label{ok}
 \tau'(t) = \left( \frac E{E_0}\right)^{-n(p-1)/2-1}\int u^p \,
 dy.
 \ee
 Since $\tau'(t) >0$, \fer{ok} gives a good (implicit) time-change of
 variables. We point out that
  \[
\tau'(t) = \frac 1{2n}\frac{E_0}E \frac{dE}{dt}= \frac{E_0}{2n}
\frac d{dt} \log E(v(t)),
  \]
which implies \fer{new-t}. Hence, $u(y, \tau)$ satisfies the
Fokker-Planck equation \fer{FPR}.

Let us know look for the steady states  $\bar B(y)$ of \fer{FPR} that solve
 \be\label{ss}
\nabla_y \cdot \left( \frac{\nabla_y \bar B^p}{\int \bar B^p \, dy}+ \frac n{E_0}
y\, \bar B \right) = 0 .
 \ee
In the one-dimensional case, an exhaustive description of the properties of the steady states of various generalized Fokker-Planck equations, including \fer{FPR} can be found in \cite{FD}.  In particular, from the detailed discussion of the behaviour of the stationary solutions as the exponent $p$ tends to the lower  bound  $\bar p$ ($\bar p = 1/3$ in dimension $1$), one can argue that this lower bound is of paramount importance in connection with the eventual convergence to equilibrium.

As one can check using classical entropy arguments, to satisfy equation \fer{ss} it is sufficient to choose the argument of the divergence equal to zero
 \be\label{ss0}
\frac{\nabla_y \bar B^p}{\int \bar B^p \, dy}+ \frac n{E_0} y\, \bar B = 0.
 \ee
Multiplying \fer{ss0} by $y$ and integrating, we obtain
 \[
 \frac{\int y\cdot \nabla_y \bar B^{p}}{\int \bar B^{p} \, dy}+
\frac n{E_0} \int |y|^2 \bar B(y) \, dy = 0,
 \]
or, what is the same
 \be\label{en1}
 \int|y|^2 \bar B(y) \, dy = E_0 = \int |y|^2 u(y, \tau) \, dy.
 \ee
This fixes the second moment of the steady state to $E_0$. Suppose we are given a steady state $\bar B(y)$, which has energy $E_0$ due to \fer{en1}. Then, by defining for $\sigma>0$
 \bee\label{sc2}
\bar B_\sigma(y) = \left( \frac 1{\sqrt\sigma}\right)^{n+2} \bar B\left( \frac
y{\sqrt\sigma}\right),
 \eee
we obtain a class of functions $\bar B_\sigma$ with the same energy of $\bar B$. Moreover, a
direct inspection shows that  $\bar B_\sigma$ still satisfies
\fer{ss0} for all values of $\sigma$. Hence, by choosing $\sigma$
such that
 \be\label{gi}
\sigma\int \bar B_\sigma^{p}\, dy = \frac{E_0}n,
 \ee
the corresponding steady state satisfies the relationship
 \be\label{ok1}
\sigma\int \bar B_\sigma^{p}\, dy = \frac 1n \int|y|^2 \bar B_\sigma(y) \,
dy.
 \ee
We can easily argue that it is always possible to choose $\sigma$ to
satisfy \fer{gi}, and that this choice is unique. In fact, owing to the
definition of $\bar B_\sigma$, we obtain
 \[
\phi_p(\sigma)= \sigma\int \bar B_\sigma(y)^{p}\, dy = \sigma \left( \frac
1{\sqrt\sigma}\right)^{(n+2)p - n}\int B(y)^{p}\, dy .
 \]
Then, when $p<1$, the function $ \phi_p(\sigma) $ is (strictly)
increasing starting from $\phi_p(0) = 0$. Analogous conclusion holds
for $p>1$, which generates a strictly decreasing function $\phi_p$.

Let now $B_\sigma(y)$ be the unique probability density such that
 \be\label{pd}
 \frac p{p-1} \nabla_y B_\sigma^{p-1}(y) + \frac y\sigma= 0.
 \ee
This probability density satisfies \fer{ok1}, and consequently \fer{ss0}.
Therefore, equation \fer{ss} has always a unique solution with the right
moments up to the order two. Solving equation \fer{pd} we obtain
 for $0<p<1$
 \be\label{p1}
 B_\sigma(y) = \left( C_B + \frac{1-p}p \frac{|y|^2}{2\sigma}
 \right)^{1/(p-1)},
 \ee
on $\R^n$, while,  if $p>1$
 \be\label{p2}
B_\sigma(y) = \left( C_B - \frac{p-1}p \frac{|y|^2}{2\sigma}
 \right)_+^{1/(p-1)},
 \ee
where, as usual, $f_+(y)$ denotes the positive part of the function
$f(y)$. In \fer{p1} and \fer{p2} the constants $C_B$ and $\sigma$
can be chosen in a unique way to fix the mass and the energy.

The previous reasoning shows that, for any given initial datum
$u_0(y)$ satisfying conditions \fer{cons} and of energy equal to
$E_0$, the steady state of equation \fer{FPR} with the same mass,
momentum and energy of $u_0$ can be always represented in the form
\fer{p1} when $\bar p<p<1$ (respectively \fer{p2} when $p>1$) for a
unique choice of the parameters $C_B$ and $\sigma$. For this reason,
in what follows, for any given centered probability density function
$f$ of bounded second moment, we will always denote by
$B_{\sigma(f)}$ the Barenblatt solution  of type \fer{p1} or
\fer{p2} with the same second moment of $f$. This explicit
dependence on $f$ will be dropped when there is no possibility of
confusion.

\section{Improved decay in the linear case}
\label{sec:linear}   

We will start our analysis by considering the linear case $p=1$. Thanks to mass conservation \fer{cons} the Fokker-Planck
equation \fer{FPR} reduces to the classical linear Fokker-Planck equation
 \be\label{FPL}
\frac{\partial u}{\partial \tau} = \Delta_y u + \frac n{E_0} \nabla_y (y\,
u).
 \ee
Equation \fer{FPL} is obtained from the heat equation
 \be\label{heat}
{\partial v \over \partial t} = \Delta v ,\qquad (x\in \R^n, t>0)
 \ee
by means of the scaling
  \be\label{cvl}
v(x,t) = \left( 1 + \frac{2nt}{E_0}\right)^{-n/2} u\left[x \left( 1
+ \frac{2nt}{E_0}\right)^{-1/2}, \tau(t) \right],
 \ee
where
 \be\label{new-tt}
\tau(t) = \frac{E_o}{2n} \log \left( 1 + \frac{2nt}{E_0}\right)\,,
\quad \tau(0) = 0.
 \ee
If $p=1$, in fact, the law of evolution of the second moment is explicit, and it is
given by the linear growth
$
E(v(t)) = E_0 + 2nt.
$
We remark that the presence of the coefficient in front of the divergence term in
\fer{FPL} implies that, for a given initial value that satisfies the normalization
conditions \fer{norm1}, the solution to \fer{FPL} still satisfies \fer{norm1}. As we
will see, this property has important consequences.

The steady states of the linear Fokker-Planck equation are given by Gaussian
densities. Since any solution to \fer{FPL} which departs from an initial value that
satisfies conditions \fer{norm1} still satisfies the same conditions, the steady state
is given by the Gaussian density
 \be\label{max}
M_\sigma(y) = \left(\frac 1{2\pi \sigma} \right)^{n/2} \exp \left\{-
\frac{|y|^2}{2\sigma} \right\},
 \ee
where $\sigma = E_0/n$. For the sake of simplicity, we will simply write $
M_{E_0/n} = M$.

The goal is to study the rate of convergence of the solution of the Fokker-Planck
equation \fer{FPL} towards $M(y)$. Starting from the analysis in \cite{ToFP}, the
large-time behavior of linear Fokker-Planck type equations has been studied in details
by means of entropy-production methods \cite{AMTU}. This method relies in the study of
the evolution of the relative entropy, typically the relative Shannon entropy
\fer{rsh}. In the case under examination, in reason of the fact that both the solution
and $M$ satisfy conditions \fer{norm1}, and in particular are probability densities
with the same second moment, the relative entropy coincides with the difference
between the Shannon entropies of the Gaussian density and that of the solution
 \be\label{uB}
 \mH(u(\tau)|M ) = \mR(M)- \mR(u(\tau)).
 \ee
In \fer{uB}, $\mR(u)$ is given as in \fer{shan}.
Standard computations (cf. \cite{ToFP}) show that, if $u(\tau)$ solves \fer{FPR}
 \bee\label{de1}
\frac{d}{d\tau}\left( \mR(M)- \mR(u)\right) = - \frac{d}{d\tau} \mR(u)= - \left(
\mI(u) - \frac{n^2}{E_0} \right),
 \eee
where we defined by $\mI(f)$ the Fisher information of the probability
density $f$ as 
\be\label{fis}
\mI(f) = \int_{\{f>0\}} \frac{|\nabla f|^2} f \, dy.
 \ee
Since
$
\mI(M) = \frac{n^2}{E_0},
$
it holds
 \be\label{dec11}
\frac{d}{d\tau}\left( \mR(M)- \mR(u)\right) = - \left( \mI(u) - \mI(M)  \right).
 \ee
The well-known entropy production method consists in bounding from below the
difference between the Fisher information of $u$ and that of $M$, in terms of the
relative entropy $\mR(M) - \mR(u)$. Indeed, for any given probability density $f$, and
for any given $\sigma>0$, the logarithmic Sobolev inequality \cite{ACMTU, ToFP} takes
the form
 \be\label{LS}
 \mI(f) - \mI(M_\sigma) \ge \frac 2n \mI(M_\sigma) \left( \mR(M_\sigma)
- \mR(f) \right).
 \ee
Choosing $\sigma = E_0/n$ in \fer{LS} one obtains
 \be\label{lsi}
\mI(f) - \mI(M) \ge \frac{2n}{E_0}\left( \mR(M) - \mR(f) \right).
 \ee
Using inequality \fer{lsi} into \fer{dec11}, one shows that the relative entropy decay
exponentially at the rate $2n/E_0$
 \bee\label{decl}
\mH(u(\tau)|M ) \le \mH(v_0|M )\exp\left\{ - \frac{2n}{E_0} \tau \right\}.
 \eee
A different insight into the rate of decay in relative entropy follows by an
elementary remark, which simply relies in a different way to write the right-hand side
of equation \fer{dec11}. Indeed, equation \fer{dec11} can be equivalently written as
 \be\label{dec2}
\frac{d}{d\tau}\left( \mR(M)- \mR(u)\right) = -\frac{n^2}{E_0} \left(
\frac{\mI(u)}{\mI(M)} - 1 \right).
 \ee
The quotient of the two Fisher entropies in \fer{dec2} can be bounded from below by
resorting to the so-called \emph{Isoperimetric Inequality for Entropies} \cite{DCT,
To1}. This inequality relates Fisher information of a probability density with its
Shannon entropy power, and asserts that, for any probability density $f$, and for any
$\sigma>0$ it holds
 \be\label{b5}
{\mI(f)}\mN(f) \ge {\mI(M_\sigma)}{\mN(M_\sigma)}.
 \ee
The entropy power of a probability density $f(x)$, where $x \in \RR^n$,  is defined by
 \be\label{pow}
\mN(f) = \exp\left\{ \frac 2n \mR(f) \right\}.
 \ee
Since
$
\mR(M_\sigma) = \frac n2 \log 2\pi\sigma e,
$
the entropy power of the Gaussian density $M_\sigma$ defined in \fer{max}, in any
dimension of the space variable, is proportional to $\sigma$, with $\mN(M_\sigma) =
2\pi \sigma e$. It is immediate to verify that inequality \fer{b5} implies the
logarithmic Sobolev inequality \fer{lsi} \cite{To1}. Indeed, inequality \fer{pow} can
be rewritten in the form
 \be\label{b55}
\frac{\mI(f)}{\mI(M_\sigma)} \ge \exp\left\{- \frac 2n (\mR(f)-\mR(M_\sigma))
\right\}.
 \ee
Therefore
 \[
\frac{\mI(f)}{\mI(M_\sigma)} -1 \ge \exp\left\{- \frac 2n (\mR(f)-\mR(M_\sigma))
\right\} -1 \ge\frac 2n\left( \mR(M_\sigma) - \mR(f) \right),
 \]
which gives \fer{LS}. Inequality \fer{b5} has been derived as a consequence of the
concavity of Shannon entropy power \cite{DCT}, a property first obtained in a paper by
Costa \cite{Cos} in the framework of information theory. Due to its importance in the
forthcoming analysis, let us briefly discuss this result and its physical meaning. By
definition, the entropy power of a Gaussian random variable is proportional to its
density. Consequently, if $M_{2t}$ denotes the Gaussian self-similar solution to the
heat equation \fer{heat}, its entropy power is a linear function of time. The essence
of the concavity of Shannon entropy power is that, among all possible solutions to the
heat equation, the Gaussian solution is the only one for which the entropy power is a
linear function of time. For all the other solutions, the entropy power is a concave
function of time. The original proof of the concavity property  of entropy power was
simplified by Villani \cite{Vil}, who made an interesting connection between
information and kinetic theories. It was shown in \cite{Vil} that the concavity proof
could be based on an old argument introduced by McKean \cite{McK} in connection with
Kac caricature of the Boltzmann equation. Using inequality \fer{b55} into \fer{dec2},
and choosing $\sigma = E_0/n$, one obtains
  \be\label{dec21}
\frac{d}{d\tau}\left( \mR(M)- \mR(u)\right)  \le -\frac{n^2}{E_0} \left( \exp\left\{-
\frac 2n (\mR(u)-\mR(M)) \right\} - 1 \right).
 \ee
Equation \fer{dec21} implies a better decay with respect to the decay obtained from the
classical methods based on logarithmic Sobolev inequality. In fact, the relative
Shannon entropy $h(\tau) = \mH(u(\tau)|M)$ satisfies the differential inequality
 \be\label{opt}
\frac{dh(\tau)}{dt} \le -\frac{n^2}{E_0} \left( \exp\left\{\frac 2n
h(\tau) \right\}  -1 \right).
 \ee
Equation \fer{opt} can be solved by separation of variables to give
 \bee\label{de33}
h(\tau) \le - \frac n2 \log \left[ 1 - \left( 1- \exp\left\{-\frac 2n h(0)
\right\} \right)\exp\left\{- \frac{2n}{E_0}\tau \right\}  \right].
 \eee
Hence, the relative entropy has been shown to satisfy the decay
  \be\label{de43}
\mH(u(\tau)|M )  \le  \frac n2 \log \left[ 1 - \left( 1-
\exp\left\{-\frac 2n \mH(v_0|M ) \right\} \right)\exp\left\{-
\frac{2n}{E_0}\tau \right\} \right]^{-1}.
 \ee
By comparison, one easily concludes that the decay predicted by \fer{de43} is better
than the decay found in \fer{dec2}. Indeed, for any given positive constants $\alpha$
and $\beta$, the function
 \bee\label{r1}
r_1(\tau) =  \frac n2 \log \left[ 1 - \left( 1- e^{-\frac 2n\alpha}
\right)\exp\left\{- \beta \tau \right\}  \right]^{-1}
 \eee
is always smaller than the function
$
r_2(\tau) = \alpha e^{-\beta \tau}.$
By means of the scaling \fer{cvl}-\fer{new-tt} the time decay of the relative Shannon
entropy of the solution to the Fokker-Planck equation \fer{FPL} can be translated into
the time decay of the relative entropy between the solution $v(x,t)$ of the heat
equation \fer{heat} and the self-similar Gaussian solution which at time $t=0$ has the
same second moment of $v_0(x)$. However, it is remarkable that inequality \fer{b5} can
be applied directly to the solution to the heat equation \fer{heat}, to obtain the
time decay of the relative entropy. To this extent, for a given initial datum $v_0(x)$
that satisfies conditions \fer{norm1}, we denote by $M(x,t)$ the Gaussian density,
solution to the same heat equation \fer{heat}, which corresponds to the initial value
$M_{E_0/n}$, and still satisfies conditions \fer{norm1}. Then, for all subsequent
times $t
>0$, the moments of both $v(x,t)$ and $M(x,t)$ coincide up to order
two. In the linear case, in fact, the evolution in time of the
second moment of the solution depends only on the second moment of
the initial value. Therefore, for all times $t
>0$
 \bee\label{rl1}
\mH(v(t)|M(t)) = \mR(M(t)) - \mR(v(t)).
 \eee
Let us look for the time evolution of the Shannon relative entropy power
 \bee\label{ep}
\mN(v(t)|M(t)) = \exp\left\{ \frac 2n (\mR(v(t)) - \mR(M(t)))\right\}.
 \eee
Since both $u(x,t)$ and $M(x,t)$ satisfy equation \fer{heat}
 \[
\frac{d}{dt} \mN(v(t)|M(t)) =  \frac 2n \left(\mI(v(t)) - \mI(M(t))
\right)\mN(v(t)|M(t)) =
 \]
 \be\label{dec41}
\frac 2n \mI(M(t)) \left(\frac{\mI(v(t))}{\mI(M(t))} -1
\right)\mN(v(t)|M(t)).
 \ee
Considering that $\mI(M(t)) = n^2/(E_0 + 2nt)$, and making use of
the isoperimetric inequality \fer{b5} in \fer{dec41} one obtains
 \be \label{dec42}
\frac{d}{dt} \mN(v(t)|M(t)) \ge  \frac{2n}{E_0 + 2nt} \left( 1 -
\mN(v(t)|M(t))\right).
 \ee
Inequality \fer{dec42} can be easily solved to give
 \be\label{dec43}
\mN(v(t)|M(t)) \ge 1- \left( 1- \mN(v_0|M)) \right)\frac{E_0}{E_0+ 2nt},
 \ee
or, what is the same
 \[
\mH(v(t)|M(t)) \le - \frac n2 \log\left[ 1- \left( 1- \mN(v_0|M))
\right)\frac{E_0}{E_0+ 2nt}\right].
 \]
Clearly, since $u(y, \tau)$ and $v(x,t)$ are related by the scaling
relations \fer{cvl} and \fer{new-tt}, inequalities \fer{dec43} and
\fer{de43} coincide. The previous argument shows that in the linear
case there is no necessity to scale the problem, and consequently to
study the relaxation to equilibrium for the solution to the
Fokker-Planck equation. The results can be obtained directly from
the study of the solution to the heat equation. This direct proof is
however limited to the linear case. We proved

\begin{theorem}\label{th:li}
Let $v(x,t)$ be the solution to the initial value problem for equation \eqref{heat},
where the initial value $u_0(x)$ is of bounded Shannon entropy, and satisfies
conditions \eqref{norm1}. Then, if $M(t)$ denotes the Gaussian density satisfying
conditions \eqref{norm1} at time $t=0$, the relative Shannon entropy $\mH(v(t)|M(t))$
decays to zero with time, and satisfies the bound
 \bee\label{dec44}
\mH(v(t)|M(t)) \le - \frac n2 \log\left[ 1- \left( 1- \exp\left\{-\frac 2n
\mH(v_0|M))\right\} \right)\frac{E_0}{E_0+ 2nt}\right].
 \eee
\end{theorem}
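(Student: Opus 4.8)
The plan is to convert the formal computation carried out in the discussion preceding the statement into a self-contained argument resting on three ingredients: the explicit evolution of the second moment, de Bruijn's identity for the heat flow, and the isoperimetric inequality for entropies \fer{b5}. The starting point is structural. In the linear case the second moment of any solution to \fer{heat} evolves as $E(v(t)) = E_0 + 2nt$, a quantity depending only on the initial second moment. Since $M(t)$ is selected to issue from $M_{E_0/n}$, which carries second moment $E_0$, both $v(t)$ and $M(t)$ share the common value $E_0 + 2nt$ at every $t>0$. Together with conservation of mass and the vanishing first moment in \fer{norm1}, this makes the cross term in the relative Shannon entropy cancel, so that $\mH(v(t)|M(t)) = \mR(M(t)) - \mR(v(t))$. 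This identity is precisely what opens the door to the entropy-power method.

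First I would introduce the relative entropy power $\mN(v(t)|M(t)) = \exp\left\{(2/n)(\mR(v(t)) - \mR(M(t)))\right\}$ and differentiate it in time. The essential analytic input is de Bruijn's identity: for any solution $w$ of \fer{heat} one has $\frac{d}{dt}\mR(w) = \mI(w)$, with $\mI$ the Fisher information \fer{fis}. Applying this to both $v$ and $M$ and using the chain rule reproduces \fer{dec41}, namely that $\frac{d}{dt}\mN(v(t)|M(t))$ equals $\frac 2n\,\mI(M(t))\left(\frac{\mI(v(t))}{\mI(M(t))} - 1\right)\mN(v(t)|M(t))$.

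Next I would substitute the explicit value $\mI(M(t)) = n^2/(E_0 + 2nt)$ and invoke the rescaled isoperimetric inequality \fer{b55} with $f = v(t)$ and the Gaussian of matching second moment $M(t)$, which bounds the ratio $\mI(v(t))/\mI(M(t))$ from below by $\mN(v(t)|M(t))^{-1}$. Hence $\left(\frac{\mI(v)}{\mI(M)} - 1\right)\mN \ge 1 - \mN$, and after substitution the derivative obeys the differential inequality \fer{dec42}, that is $\frac{d}{dt}\mN(v(t)|M(t)) \ge \frac{2n}{E_0 + 2nt}\bigl(1 - \mN(v(t)|M(t))\bigr)$.

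Finally I would solve this as a first-order linear differential inequality. Writing $N(t) = \mN(v(t)|M(t))$, multiplication by the integrating factor $E_0 + 2nt$ gives $\frac{d}{dt}\bigl((E_0+2nt)N(t)\bigr) \ge 2n$; integrating from $0$ to $t$ yields the lower bound \fer{dec43}, namely $N(t) \ge 1 - (1 - N(0))\frac{E_0}{E_0+2nt}$, where $N(0) = \mN(v_0|M) = \exp\left\{-\frac 2n \mH(v_0|M)\right\}$. Since $\mH(v(t)|M(t)) = -\frac n2 \log N(t)$, taking logarithms and reversing the inequality produces exactly the stated estimate. The main obstacle is not any single algebraic step but the analytic justification underlying de Bruijn's identity and the differentiation of the entropy: one must verify that along the heat flow the Shannon entropy is finite and differentiable and that the Fisher information is finite for $t>0$. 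These facts follow from the instantaneous smoothing of the heat semigroup together with the hypothesis of bounded initial entropy, which legitimizes all the manipulations above.
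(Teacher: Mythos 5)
Your proposal is correct and follows essentially the same route as the paper's own derivation: the identity $\mH(v(t)|M(t)) = \mR(M(t)) - \mR(v(t))$ from the matching second moments, differentiation of the relative entropy power via de Bruijn's identity, the isoperimetric inequality for entropies \fer{b5}--\fer{b55}, and integration of the resulting differential inequality \fer{dec42} to get \fer{dec43}. Your explicit integrating-factor computation and the closing remarks on the analytic justification (smoothing of the heat semigroup, finiteness of entropy and Fisher information) are minor refinements of steps the paper leaves implicit.
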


Theorem \ref{th:li} asserts that the rate of decay to equilibrium in relative entropy,
as soon as we choose the self-similar solution with the same energy of the initial
value, is bounded in time by a quantity which proportional to the inverse of the rate
of growth of the second moment. This decay improves the known results in the first
part of the time axis, while maintaining the same rate of decay for large time. This
shows that, among all self-similar solutions that represents the intermediate
asymptotics of the solution to the linear diffusion equation \fer{heat} for large
times, in the first part of the time axis, the best approximation to the solution is
furnished by the self-similar solution with the same energy.

As we shall prove in the forthcoming section, the same property holds as soon as we
are considering the time decay of the relative R\'enyi entropy.

\section{R\'enyi entropies and nonlinear diffusions}
\label{sec:renyi}   

As briefly explained in the introduction, the second moment of the
solution to equation \fer{poro} has an important role in connection
with the knowledge of the large time behavior of the solution. This
importance can be made explicit by the following argument. Let us
consider the evolution in time of the R\'enyi entropy of order $p$
along the solution of the nonlinear diffusion equation \fer{poro},
with the same exponent $p$. Integration by parts immediately leads
to
 \be\label{e-r}
 \frac {d}{dt} \mR _p(v(\cdot,t)) =  \mI_p(v(\cdot,t)), \quad t >0,
 \ee
where, for a given probability density $f$
 \be\label{fis-r}
 \mI_p(f) := \frac 1{\int_{\RR^n} f^p \, d x}
   \int_{\{f>0\}} \frac{|\nabla f^p(x)|^2}{f(x)} \, d x.
 \ee
When $p \to 1$, identity \fer{e-r} reduces to DeBruijn's identity,
which connects Shannon's entropy functional with the Fisher
information \fer{fis} via the heat equation. Since $\mI_p(f)
>0$,  identity \fer{e-r} shows that the R\'enyi entropy of the
solution to equation \fer{poro} is increasing in time. However, no
information about the possible behavior of the solution follows.

For a given probability density $f(x)$, $x \in \R^n$, we define the
dilation of $f$ by
 \bee\label{scal1}
 f(x) \to f_a(x) = {a^n} f\left( {a} x \right), \quad a >0.
 \eee
Since dilation preserves the total mass, for any given $a>0$ the function $f_a$ will remain a probability density. Note that R\'enyi entropies are such that, for all $p >0$
 \bee\label{h-scale}
\mR_p (f_a) = \mR_p(f) - n \log a.
 \eee
Analogously, we have
 \bee\label{sec}
E(f_a) = \int_{\RR^n} |x|^2 f_a(x) \, dx = \frac 1{a^2} E(f).
 \eee
Therefore, if we define
 \bee\label{ent2}
\Lambda_p(f) = \mR_p(f) - \frac n2 \log E(f).
 \eee
the functional $\Lambda$ is invariant under dilation.

Let $v(x,t)$ be a solution to equation \fer{poro}.  If we compute
the time derivative of $\Lambda_p(v(t))$, we obtain
 \be\label{der1}
\frac d{dt}\Lambda_p(v(t)) = \mI_p(v(t)) - n^2 \frac {\int
v^p(t)}{E(v(t))},
 \ee
which is a direct consequence of both identities \fer{ene} and
\fer{e-r}.

The right-hand side of \fer{der1} is nonnegative. This can be easily
shown by an argument which is often used in this type of proofs \cite{To2}, and
goes back at least to McKean \cite{McK}.  One
obtains
\begin{align}
0 \le \,&\int_{\{v>0\}}\left( \frac{\nabla v^p(x)}{v(x)} + nx \frac
{\int v^p}{E(v)}\right)^2\, \frac{v(x)}{\int v^p } \,dx  \nonumber\\
=\,&\mI_p(v) + n^2\frac {\int v^p}{E(v)^2}\int_{\R^n} |x|^2 v(x) \, dx +
\frac {2n}{E(v)} \int_{\{v>0\}} x\cdot \nabla v^p(x)\, dx
\nonumber\\
\label{pos}
=\,&\mI_p(v) + n^2 \frac {\int v^p}{E(v)} -  2n^2\frac {\int v^p}{E(v)}
= \mI_p(v) - n^2 \frac {\int v^p}{E(v)}.
\end{align}
Note that equality to zero in \fer{pos} holds if and only if, when
$v(x)>0$
 \[
\frac{\nabla v^p(x)}{v(x)} + n x \frac {\int v^p}{E(v)} = 0.
 \]
This condition can be rewritten as
 \bee\label{pos2}
\nabla\left( v^{p-1} + \frac {p-1}{2p} |x|^2\frac{n\int v^p}{E(v)}
\right) = 0
 \eee
which identifies the probability density $v(x)$ as a Barenblatt
density in $\R^n$.
The simple addition of the second moment to R\'enyi entropy, coupled with the
constraint of invariance of the new functional $\Lambda_p$ with
respect to dilation, allows to identify in a precise way the
large-time behavior of the solution to \fer{poro}.

The argument we presented is twofold. From one side, it represents a
powerful tool to study the large-time behavior of solutions to nonlinear
diffusion equations. From the other side, it allows to find inequalities
by means of solutions to these nonlinear diffusions. Indeed, we proved
that, for any probability density function $f$ with bounded second moment
 \[
\mR_p(f) - \frac n2 \log E(f) \le \mR_p(B_\sigma) - \frac n2 \log
E( B_\sigma), \quad \mbox{for all } \sigma >0\,,
 \]
where $B_\sigma$ is the unique Barenblatt probability density satisfying \fer{pd}.
We remark that for $p >\bar p = n/(n+2)$ the quantity $\mR_p( B_\sigma) - \frac n2 \log
E( B_\sigma)$ is bounded. This is obvious if $p>1$. On the other hand, whenever $\bar p < p < 1$ the second moment of the Barenblatt \fer{bar} is bounded. Moreover, thanks to the identity
 \[
 B_\sigma^p(x) =  B_\sigma^{p-1}(x)  B_\sigma(x)\leq D (1+|x|^2)  B_\sigma(x),
 \]
where $D$ is a suitable constant, the $L^p$-norm of the Barenblatt function can be bounded in terms of the mass and the second moment.
The previous inequality  has an interesting consequence, we will use extensively in the following. Since the functional $\Lambda(f)$ is invariant with respect to dilation, the value of the right-hand side will be invariant with respect to the change of the second moment of the Barenblatt solution. Indeed, the change of the second moment of $B_\sigma$ can be obtained by dilation. By fixing the second moment of the Barenblatt equal to the second moment of $f$, one shows that
$
\mR_p( B_{\sigma(f)}) - \mR_p(f) \ge 0.
$
These results are collected next.

 \begin{lemma} Let $p > \bar p$. Then any probability density $f$ with bounded second moment satisfies the inequality
  \bee\label{in12}
\mR_p(f) - \frac n2 \log E(f) \le \mR_p(B) - \frac n2 \log
E(B_\sigma),  \quad \mbox{for all } \sigma >0\,,
 \eee
 where $B_\sigma$ is the unique Barenblatt probability density given by \eqref{pd}. Moreover, denoting by $B_{\sigma(f)}$ the Barenblatt probability density with the same second moment of $f$, then
 \be\label{posi}
\mR_p(B_{\sigma(f)}) - \mR_p(f) \ge 0.
 \ee
 \end{lemma}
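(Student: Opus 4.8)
The plan is to use the dilation-invariant functional $\Lambda_p(f) = \mR_p(f) - \frac n2 \log E(f)$ introduced above as a monotone quantity along the nonlinear diffusion. By identity \fer{der1} and the nonnegativity established in \fer{pos}, any solution $v(x,t)$ of \fer{poro} satisfies $\frac{d}{dt}\Lambda_p(v(t)) = \mI_p(v(t)) - n^2 \frac{\int v^p}{E(v)} \ge 0$, so $\Lambda_p$ is nondecreasing along the flow, while the equality case of \fer{pos} forces $v$ to be a Barenblatt density. Thus the Barenblatt profiles are the stationary points of $\Lambda_p$ under the flow, and the inequality \fer{in12} amounts to the assertion that they are its maximizers.

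First I would run the Cauchy problem for \fer{poro} from the datum $f$ (a centered probability density of bounded second moment, admissible since $p > \bar p$). Monotonicity gives $\Lambda_p(f) \le \Lambda_p(v(t))$ for every $t>0$. Because $\Lambda_p$ is invariant under dilations, $\Lambda_p(v(t))$ equals $\Lambda_p$ of the self-similarly rescaled profile $u(\cdot,\tau)$ from \fer{cv}, which keeps its second moment fixed at $E_0$ by \fer{norm1} and converges to the Barenblatt $B$ by the classical results on convergence to self-similarity \cite{CaTo, Va}. Passing to the limit yields $\Lambda_p(f) \le \Lambda_p(B)$, and since every $B_\sigma$ is a dilation of $B$ one has $\Lambda_p(B_\sigma) = \Lambda_p(B)$ for all $\sigma>0$; this is \fer{in12}. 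To obtain \fer{posi}, I would simply take $B_\sigma = B_{\sigma(f)}$, the Barenblatt with the same second moment as $f$, so that $E(B_{\sigma(f)}) = E(f)$; the logarithmic terms in \fer{in12} then cancel and leave $\mR_p(f) \le \mR_p(B_{\sigma(f)})$.

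The delicate point is the limit passage $\Lambda_p(u(\tau)) \to \Lambda_p(B)$: one must know that the convergence to the Barenblatt profile is strong enough to carry the $L^p$-norm $\int u^p$ to $\int B^p$ (the second moment being constant along the scaled flow). Finiteness of all the quantities for $p > \bar p$ is guaranteed by the bound $B_\sigma^p \le D(1+|x|^2) B_\sigma$ noted above, which must be propagated along the flow to ensure uniform integrability of $u^p$ in the limit. A route that sidesteps the dynamics entirely is to recognize \fer{in12} as the sharp Gagliardo--Nirenberg inequality whose extremals are precisely the Barenblatt densities: then the computation \fer{pos} together with its equality case already identifies the maximizer, and one only needs the sharp constant, which is exactly $\Lambda_p(B)$.
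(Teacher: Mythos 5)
Your proposal follows essentially the same route as the paper: monotonicity of the dilation-invariant functional $\Lambda_p$ along the flow \fer{poro} (via \fer{der1} and the McKean-type computation \fer{pos}, whose equality case singles out Barenblatt profiles), convergence of the rescaled solution to the Barenblatt density, and dilation invariance to pass to arbitrary $\sigma$ and then to $\sigma(f)$ for \fer{posi}. If anything, you are more explicit than the paper about the one delicate step, namely that the convergence to self-similarity must be strong enough to pass $\int u^p\,dy$ to the limit, which the paper leaves implicit.
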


Let us define the nonnegative quantity in \fer{posi} as the relative R\'enyi entropy
 \be\label{re-r}
 \mRH_p(f) = \mR_p(B_{\sigma(f)}) - \mR_p(f) = \frac1{p-1} \log \left(\frac{\int_{\RR^n} f(x)^p \,dx}{\int_{\RR^n} B_{\sigma(f)}(x)^p \,dx}\right).
 \ee
Since the second moments of $f$ and $B_{\sigma(f)}$ are equal, we can write the relative R\'enyi entropy as
\begin{align}
 \mRH_p(f) = \,&\frac1{p-1} \log \left(\frac{\int_{\RR^n} f(x)^p \,dx}{\left(\int_{\RR^n} B_{\sigma(f)}(x)^p \,dx\right)^{1-p} \left(\int_{\RR^n} B_{\sigma(f)}(x)^{p-1}f(x) \,dx\right)^{p}}\right)\nonumber\\
=\,&\mH_p(f|B_{\sigma(f)})\,,\label{re-r2}
\end{align}
for $\bar p<p<1$, due to \eqref{r-re}.

The relative R\'enyi entropy can be easily related to the relative entropy functional \fer{rel-e}
of Newton and Ralston.

\begin{lemma} Let $p > \bar p$. Then, provided the probability density $f$ and the Barenblatt solution satisfy conditions \eqref{norm1}, the relative Ralston entropy \eqref{rel-e} can be bounded from above in terms of the relative  R\'enyi entropy \eqref{re-r}, and the following bounds hold. If $p < 1$
 \be\label{pl1}
 \mF_p(f|B_{\sigma(f)}) \le \left(\int_{\RR^n} B_{\sigma(f)}^p \, dx \right) \mRH_p(f),
 \ee
 while, if $p>1$
 \be\label{pg1}
\mF_p(f|B_{\sigma(f)}) \le \left(\int_{\RR^n} f^p\, dx\right) \mRH_p(f) .
 \ee
\end{lemma}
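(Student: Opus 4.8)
The plan is to expand the Newton--Ralston functional \eqref{rel-e} and to exploit the explicit affine structure of $B^{p-1}$ recorded in \eqref{p1}--\eqref{p2}. Writing $B = B_{\sigma(f)}$ and setting $I_f = \int_{\RR^n} f^p\, dx$, $I_B = \int_{\RR^n} B^p\, dx$, a direct expansion of \eqref{rel-e} gives
\[
\mF_p(f|B) = \frac{1}{p-1}\left[\, I_f + (p-1)I_B - p\int_{\RR^n} B^{p-1}f\, dx\,\right].
\]
The only nontrivial term is $\int B^{p-1}f\,dx$. The key observation is that, by \eqref{pd} (equivalently by \eqref{p1}), the quantity $B^{p-1}$ is an affine function of $|y|^2$, say $B^{p-1} = \alpha + \beta|y|^2$. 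Since $f$ and $B$ share the same mass and the same second moment (both obey \eqref{norm1}), integrating this affine expression against $f$ and against $B$ yields the same number. In the fast diffusion range $\bar p < p < 1$, where $B$ is strictly positive on all of $\RR^n$, this gives the exact identity $\int B^{p-1} f\, dx = \int B^p\, dx = I_B$, whence $\mF_p(f|B) = \frac{1}{p-1}(I_f - I_B)$.

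With this simplification in hand I reduce both claimed bounds to elementary inequalities for the logarithm. Recalling from \eqref{re-r} that $\mRH_p(f) = \frac{1}{p-1}\log(I_f/I_B)$ and putting $t = I_f/I_B > 0$, the bound \eqref{pl1} for $p < 1$ reads $\frac{1}{p-1}(I_f - I_B) \le I_B\,\frac{1}{p-1}\log t$; since $p-1 < 0$, after dividing by $I_B$ this is equivalent to $\log t \le t - 1$, valid for all $t > 0$. For $p > 1$ the bound \eqref{pg1} is equivalent to $t - 1 \le t\log t$, i.e.\ $t\log t - t + 1 \ge 0$, which also holds for all $t > 0$ since the left-hand side is minimized at $t = 1$, where it vanishes.

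The main obstacle is the porous medium range $p > 1$, where $B$ has compact support and the clean identity for $\int B^{p-1}f$ fails. Here $B^{p-1} = (\alpha + \beta|y|^2)_+$ with $\beta < 0$, so the affine function $\alpha + \beta|y|^2$ becomes negative off the support of $B$; consequently $\int B^{p-1}f\, dx = \int (\alpha + \beta|y|^2)_+ f\, dx \ge \int (\alpha + \beta|y|^2) f\, dx = \alpha + \beta E_0 = I_B$. This is only an inequality, but it points in the favorable direction: substituting $\int B^{p-1}f \ge I_B$ into the expansion above and using $\frac{1}{p-1} > 0$ yields $\mF_p(f|B) \le \frac{1}{p-1}(I_f - I_B)$, after which the inequality $t - 1 \le t\log t$ closes the argument exactly as before. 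It remains only to note that all the integrals are finite --- $\int B^{p-1}f$ is controlled by the second moment of $f$ when $p < 1$ and by $\|B^{p-1}\|_\infty$ when $p > 1$ --- which follows from the moment and $L^p$ bounds obtained in the preceding lemma.
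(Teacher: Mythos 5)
Your proof is correct and follows essentially the same route as the paper's: expand $\mF_p(f|B)$, exploit that $B^{p-1}$ is affine in $|x|^2$ together with the matching of mass and second moment (noting that the compact-support correction term has a favorable sign when $p>1$) to reduce everything to $\mF_p(f|B)\le \frac{1}{p-1}\left(\int f^p\,dx - \int B^p\,dx\right)$, and then conclude with an elementary logarithmic inequality. The only cosmetic difference is that you phrase this last step as $\log t \le t-1$ (resp.\ $t-1\le t\log t$) for $t=\int f^p dx/\int B^p dx$, whereas the paper uses the equivalent inequality $-\log(1-r)\ge r$.
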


\begin{proof}
To simplify the notation, we will write $B$ instead of $B_{\sigma(f)}$ in this proof.
Consider first the case $p<1$. Since the second moments of $f$ and $B$ coincide, \fer{posi} implies that $\int B^p \ge \int f^p$. Moreover, since $B^{p-1}$ has the form $A+
D|x|^2$, with $A$ and $D$ constants,
 \[
\mF_p(f|B) = \frac 1{p-1} \int_{\RR^n}( f^p - B^p) \, dx
 \]
 Therefore it holds
 \begin{align}
\mRH_p(f) = &\,\frac 1{1-p}\,  \log \frac{\int B^p}{\int f^p} = -\frac
1{1-p}\, \log \left( 1- \frac{ \int B^p - \int f^p  }{\int
B^p}\right) \nonumber\\
\ge &\, \frac 1{1-p}\, \frac{ \int B^p - \int f^p  }{\int B^p} =
\frac 1{\int B^p} \, \mF_p(f|B).\label{b-re}
\end{align}
In \fer{b-re} we used the inequality $-\log(1-r) \ge r$, with
 \[
 0 \le r = \frac{ \int B^p - \int f^p  }{\int B^p} \le 1.
 \]
Let now $p>1$. In this case,  \fer{posi} implies $\int B^p \le \int f^p$.
Proceeding as before, we obtain
 \be\label{b1-re}
\mRH_p(f) = \frac 1{1-p}\,  \log \frac{\int B^p}{\int f^p} \ge
\frac 1{p-1}\, \frac{ \int f^p - \int B^p  }{\int f^p}.
 \ee
On the other hand, when $p >1$, and $f$ and $B$ have the same
second moment
 \begin{align*}
\mF_p(f|B) = \,&\frac 1{p-1} \int_{\RR^n} \left[ f^p-B^p
-pB^{p-1}(f-B) \right]\, dx \\
=\,&
\frac 1{p-1} \int_{\RR^n} \left[ f^p-B^p\right] \, dx - \frac
p{p-1} \int_{|x|^2
>A} C^{p-1}(A-|x|^2)f \, dx \\
\le\,& \left(\int_{\RR^n} f^p\, dx\right) \mRH_p(f)\,,
 \end{align*}
due to \eqref{p2} and \eqref{b1-re}.
\end{proof}

\begin{remark} In consequence of inequalities \eqref{pl1} and \eqref{pg1}, all inequalities
of Csiszar-Kullback type involving the relative Ralston entropy as in \cite{CJMTU} remain
valid for the relative R\'enyi entropy.
\end{remark}

The deep connection between R\'enyi entropies and the nonlinear
diffusion equations \fer{poro}, has been recently outlined in
\cite{ST, To2}, where nonlinear diffusion equations have been introduced
as useful instruments to get inequalities in sharp form. The
analysis of \cite{ST,To2} extends to the nonlinear setting analogous
results proven in the linear case \cite{To1}.

The main idea in \cite{ST} was to consider a generalization of Shannon
entropy power \fer{pow} for R\'enyi entropies. The R\'enyi entropy power of
order $p$ of a probability density $f(x)$, where $x \in \R^n$,  has been
defined in \cite{ST} by
 \be\label{powr}
\mN_p(f) = \exp\left\{ \left( \frac 2n + p-1 \right) \mR_p(f) \right\}.
 \ee
With this definition, one verifies that the R\'enyi entropy power of
the self-similar Barenblatt solution of the nonlinear diffusion equation \fer{poro} grows linearly
with respect to time. As for Shannon entropy power, it was proven in
\cite{ST} that this linear growth is restricted to Barenblatt solutions,
while for any other solution of \fer{poro} the R\'enyi entropy power \fer{powr} is
a concave function of time. This property implies the analogous of the isoperimetric
inequality for (Shannon) entropies \fer{b5}.

\begin{theorem}\label{is}\,{\rm(\cite{ST})}
 Let $p > \bar p$. Then, for any probability density $f$, and
for any $\sigma>0$ it holds
 \be\label{iso-renyi}
\frac{\mI_p(f)}{\mI_p(B_\sigma)} \ge \exp\left\{- \left(\frac 2n
+p-1\right) (\mR_p(f)-\mR_p(B_\sigma)) \right\},
 \ee
where $B_\sigma$ denotes the unique Barenblatt probability density of type \eqref{p1} if $p<1$ or
\eqref{p2} if $p>1$ with the same second moment as $f$.
\end{theorem}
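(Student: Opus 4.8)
The plan is to prove \fer{iso-renyi} exactly as the isoperimetric inequality \fer{b5} was obtained from the concavity of Shannon entropy power in the linear analysis of Section \ref{sec:linear}, now feeding in the concavity of the R\'enyi entropy power \fer{powr} established in \cite{ST}. Fix a centered probability density $f$ of bounded second moment and let $v(x,t)$ solve the nonlinear diffusion \fer{poro} with initial datum $f$. Write $c := \frac 2n + p - 1$ and $N(t) := \mN_p(v(t)) = \exp\{ c\,\mR_p(v(t))\}$. Note first that $c>0$ precisely in the range $p>\bar p$: since $\bar p = n/(n+2) > (n-2)/n = 1 - 2/n$ (cross-multiplying gives $n^2 > n^2-4$), the hypothesis $p>\bar p$ forces $c>0$, and this positivity is exactly what permits the final rearrangement without reversing the inequality.

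First I would differentiate $N$ along the flow. The De Bruijn--type identity \fer{e-r}, $\frac{d}{dt}\mR_p(v(t)) = \mI_p(v(t))$, together with the chain rule gives
\[
N'(t) = c\,\mI_p(v(t))\,\mN_p(v(t)) = c\,\mI_p(v(t))\,N(t).
\]
The structural input from \cite{ST} is that $t \mapsto N(t)$ is concave, with linear growth exactly on the Barenblatt family. Concavity means $N'$ is non-increasing, hence $N(t) \le N(0) + N'(0)\,t$, which upon dividing by $t$ yields $N'(0) \ge \lim_{t\to\infty} N(t)/t$. It is this last formulation that I expect to be the most convenient to make rigorous, since it involves only the leading-order behaviour of the entropy itself rather than of the Fisher information.

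The next step identifies the asymptotic slope with the Barenblatt value. After the self-similar rescaling \fer{cv}, the solution of \fer{poro} converges to the Barenblatt profile \fer{bar}, so $N(t)$ is asymptotically linear with the same slope as that of the Barenblatt solution, which by \cite{ST} grows exactly linearly in $t$. Moreover the product $\mI_p\,\mN_p$ is dilation invariant: from the dilation laws $\mR_p(f_a) = \mR_p(f) - n\log a$ and $E(f_a) = a^{-2}E(f)$ one checks $\mN_p(f_a) = a^{-\la}\mN_p(f)$ and $\mI_p(f_a) = a^{\la}\mI_p(f)$ with $\la = 2 - n(1-p)$, so $\mI_p(f_a)\mN_p(f_a) = \mI_p(f)\mN_p(f)$. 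Hence $\mI_p(B_\sigma)\mN_p(B_\sigma)$ is one and the same constant for every $\sigma>0$, equal to the common slope, and $\lim_{t\to\infty}N(t)/t = c\,\mI_p(B_\sigma)\mN_p(B_\sigma)$. Combining with the concavity bound gives
\[
c\,\mI_p(f)\,\mN_p(f) = N'(0) \ge c\,\mI_p(B_\sigma)\,\mN_p(B_\sigma),
\]
and dividing by $c>0$ and using $\mN_p(g) = \exp\{c\,\mR_p(g)\}$ produces $\mI_p(f)/\mI_p(B_\sigma) \ge \exp\{-c(\mR_p(f)-\mR_p(B_\sigma))\}$, which is \fer{iso-renyi}. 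Because the right-hand side of the product inequality is $\sigma$-independent, the estimate holds for every $\sigma>0$, and in particular for the $B_\sigma$ of type \fer{p1} or \fer{p2} with the same second moment as $f$.

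The main obstacle is the analytic justification of the asymptotic slope, i.e.\ that $N(t)/t \to c\,\mI_p(B_\sigma)\mN_p(B_\sigma)$ (equivalently, that $\mRH_p$ of the rescaled solution tends to $0$). This is where the restriction $p>\bar p$ is genuinely used, since it guarantees finite second moments and convergence of \fer{poro} to self-similarity; once this is in hand, the remainder---the differentiation identity, the dilation invariance of $\mI_p\mN_p$, and the passage from the product inequality to \fer{iso-renyi}---is a soft consequence of the concavity supplied by \cite{ST}.
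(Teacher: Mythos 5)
Your proposal is correct and is essentially the route the paper itself invokes: Theorem \ref{is} is quoted from \cite{ST}, where the inequality \fer{iso-renyi} is obtained exactly as you describe --- De Bruijn's identity \fer{e-r}, concavity of the R\'enyi entropy power \fer{powr} along the flow of \fer{poro}, identification of the asymptotic slope with the dilation-invariant Barenblatt value --- in parallel with the derivation of \fer{b5} from concavity of Shannon entropy power sketched in Section \ref{sec:linear}. The step you flag as the main obstacle (that the relative R\'enyi entropy of the rescaled solution vanishes, so the slope limit is the Barenblatt one) is precisely the analytic input handled in \cite{ST}, and your point that $p>\bar p$, via conserved second moments, is what makes that limit work is accurate.
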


The result of Theorem \ref{is} allows to extend the result of Section \ref{sec:linear}
to the nonlinear case.


\section{Improved decay for nonlinear diffusions}
\label{sec:nl}   

We consider now the evolution of the relative R\'enyi entropy \fer{re-r} of the
solution of the nonlinear Fokker-Planck equation \fer{FPR}. Proceeding as in Section
\ref{sec:linear}, and using formula \fer{e-r} that connects R\'enyi entropy with the
generalized Fisher information \fer{fis-r}, we obtain
 \be\label{de15}
\frac{d}{d\tau}\mRH_p(u) = - \frac{d}{d\tau} \mR_p(u)= - \frac {\mI_p(u)}{\int u^p}
 + \frac{n^2}{E_0}\,,
 \ee
since $B_\sigma$ does not depend on $\tau$.
Now, consider that, owing to definition \fer{p2},  the Barenblatt solution satisfies
 \[
 \nabla B_\sigma^p = \frac p{p-1} B_\sigma \nabla B_\sigma^{p-1} = -\frac
 y\sigma B_\sigma.
 \]
Therefore, by identity \fer{ok1}
 \bee\label{fr}
 \mI_p(B_\sigma) = \frac 1{\int B_\sigma^p}\int |y|^2 B_\sigma(y) \,dy = \frac
 n\sigma.
 \eee
Last, using the expression of $\sigma$, as given by formula \fer{gi}, one concludes
with the identity
 \be\label{fin22}
\frac{n^2}{E_0} = \frac{ \mI_p(B_\sigma)}{\int B_\sigma^p}.
 \ee
Making use of \fer{fin22} into \fer{de15} we finally obtain that the time decay of the
relative R\'enyi entropy is given in the form
 \bee\label{de16}
\frac{d}{d\tau}\mRH_p(u) =  - \left(\frac {\mI_p(u)}{\int u^p} - \frac{
\mI_p(B_\sigma)}{\int B_\sigma^p} \right) = - \frac{n^2}{E_0}\left(\frac
{\mI_p(u)}{ \mI_p(B_\sigma)} \frac{\int B_\sigma^p}{\int u^p} -1 \right).
 \eee
By definition of R\'enyi entropy, one has
 \[
\frac{\int B_\sigma^p}{\int u^p} = \exp\left\{\left(p-1\right)
(\mR_p(f)-\mR_p(B_\sigma)) \right\}.
 \]
Finally, formula \fer{iso-renyi} shows that
 \bee\label{de18}
\frac{d}{d\tau}\mRH_p(u) = - \frac{n^2}{E_0}\left(\exp\left\{\frac 2n \mRH_p(u)
\right\}  -1 \right).
 \eee
Except for the meaning of the unknown quantity, the differential inequality is
identical to \fer{dec21}, relative to the linear case. Consequently, its solution
satisfies the bound
  \bee\label{de53}
\mRH_p(u(\tau))  \le  - \frac n2 \log \left[ 1 - \left( 1- \exp\left\{-\frac 2n \mRH_p(v_0)
\right\} \right)\exp\left\{- \frac{2n}{E_0}\tau \right\} \right].
 \eee
Considering that the relative r\'enyi entropy is invariant with respect to dilation,
reverting to the original variables, \fer{new-t} implies the following result:

\begin{theorem}\label{th:nl}
Let $p > \bar p$, and let $v(x,t)$ be the solution to the initial value problem for
the nonlinear diffusion equation \eqref{poro}, where the initial value $v_0(x)$ is of
bounded R\'enyi entropy, and satisfies conditions \eqref{norm1}. Then, if
$B_{\sigma(v_0)}$ denotes the Barenblatt density satisfying conditions \eqref{norm1},
the relative R\'enyi entropy $\mRH_p(v(t))$ defined in \eqref{re-r} decays to zero with
time, and satisfies the bound
 \bee\label{dec44}
\mRH_p(v(t)) \le - \frac n2 \log\left[ 1- \left( 1- \exp\left\{-\frac 2n
\mRH_p(v_0)\right\} \right)\frac{E_0}{E(v(t))}\right].
 \eee
\end{theorem}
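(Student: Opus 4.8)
The plan is to carry out the entire relative R\'enyi entropy dissipation estimate in the scaled variables $u(y,\tau)$ governed by the Fokker--Planck equation \fer{FPR}, to integrate the resulting autonomous differential inequality, and finally to transport the bound back to the original variables by exploiting the invariance of $\mRH_p$ under dilation. Since \fer{new-t} gives $\tau(0)=0$ and $y(0)=x$, the initial data of \fer{FPR} coincide with $v_0$, so $\mRH_p(u(\cdot,0))=\mRH_p(v_0)$ and the bound at the initial time is exactly the one appearing in the statement.

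For the dissipation I would start from the identity \fer{e-r} connecting $\mR_p$ with the generalized Fisher information, combine it with the energy law \fer{ene} and the identity \fer{fin22}, and write $\frac{d}{d\tau}\mRH_p(u)$ as an exact expression in the two ratios $\mI_p(u)/\mI_p(B_\sigma)$ and $\int B_\sigma^p/\int u^p$; this is the computation summarized in \fer{de15} and the lines following it. The decisive input is Theorem \ref{is}: the isoperimetric inequality \fer{iso-renyi}, a consequence of the concavity of the R\'enyi entropy power proved in \cite{ST}, which bounds $\mI_p(u)/\mI_p(B_\sigma)$ from below by $\exp\{(\frac 2n+p-1)\mRH_p(u)\}$. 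Multiplying this by the exact identity $\int B_\sigma^p/\int u^p=\exp\{(1-p)\mRH_p(u)\}$, the two exponents combine to $\frac 2n\mRH_p(u)$, so that $\frac{d}{d\tau}\mRH_p(u)\le-\frac{n^2}{E_0}(\exp\{\frac 2n\mRH_p(u)\}-1)$. This is formally identical to the linear differential inequality \fer{opt}, and separation of variables yields the same closed bound as in \fer{de43}, namely $\mRH_p(u(\tau))\le-\frac n2\log[1-(1-\exp\{-\frac 2n\mRH_p(v_0)\})\exp\{-\frac{2n}{E_0}\tau\}]$.

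It remains to return to the unscaled solution. The scaling \fer{cv} writes $v(\cdot,t)$ as the dilation $u_a(\cdot,\tau)$ with $a=(E(v(t))/E_0)^{-1/2}$, in the notation $f_a(x)=a^nf(ax)$. Under dilation $\mR_p(f_a)=\mR_p(f)-n\log a$ and $E(f_a)=a^{-2}E(f)$, so the Barenblatt density with the matching second moment is dilated by the same factor $a$; the two $-n\log a$ contributions therefore cancel in the difference $\mR_p(B_{\sigma(v)})-\mR_p(v)$, giving $\mRH_p(v(t))=\mRH_p(u(\tau(t)))$. This is precisely the dilation invariance of $\mRH_p$ (equivalently of the functional $\Lambda_p$). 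Finally, \fer{new-t} gives $\exp\{-\frac{2n}{E_0}\tau(t)\}=E_0/E(v(t))$; substituting this and the previous identity into the integrated bound produces exactly the estimate in the statement. Decay to zero follows since \fer{ene} forces $E(v(t))\to\infty$ for $p>\bar p$, whence $E_0/E(v(t))\to0$, the bracket tends to $1$, and the logarithm to $0$.

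The genuinely hard step is the one I cannot shortcut: the isoperimetric inequality \fer{iso-renyi}, i.e. the concavity of the R\'enyi entropy power, which is what upgrades the exact dissipation identity into a closed, integrable differential inequality, and which is imported from \cite{ST}. Within the self-contained part the only delicate points are the dilation invariance of $\mRH_p$, which is what allows passing between scaled and original variables without recomputing the dissipation, and the justification of the integration by parts in \fer{e-r}, which presupposes sufficient regularity and spatial decay of the solution and is made rigorous by the usual approximation argument rather than being the conceptual core.
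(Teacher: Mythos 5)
Your proposal reproduces the paper's own proof essentially step for step: the dissipation identity \fer{de15} together with \fer{fin22} in the scaled variables, the combination of the isoperimetric inequality \fer{iso-renyi} from \cite{ST} with the exact ratio $\int B_\sigma^p/\int u^p = \exp\{(1-p)\mRH_p(u)\}$ so the exponents collapse to $\tfrac 2n \mRH_p(u)$, integration of the resulting differential inequality as in the linear case, and reversion to original variables via dilation invariance of $\mRH_p$ and \fer{new-t}. The only differences are elaborations (spelling out the cancellation of the $-n\log a$ terms, and the remark that $E(v(t))\to\infty$ gives decay to zero), which the paper leaves implicit, so this is the same argument rather than an alternative route.
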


\section{Final remarks}

To finish this work, let us make some remarks about the connections of the scaled equation \eqref{FPR}, the relative R\'enyi entropy functional $\mRH_p(u(\tau))$, and gradient flow structures. We know from \cite{Ot} that the original equation in self-similar variables \eqref{poro} has the structure of gradient flow of the Ralston-Newman entropy functional $\mF_p(f|B_\sigma)$ with respect to the Euclidean Wasserstein distance. This structure is also kept for the scaled equation \eqref{FPR}. Indeed, in the parameter interval $\bar p<p<1$ we can  check that equation \eqref{FPR} is equivalent to
$$
\frac{\partial u}{\partial \tau}=\nabla_y\cdot \left(u \nabla_y \frac{\delta \mRH_p(u(\tau)|B_\sigma)}{\delta u}\right).
$$
This is immediate. By \eqref{re-r2}, we get
$$
\frac{\delta\mRH_p(f|B_\sigma)}{\delta f} = \frac{p}{p-1} \frac{f^{p-1}}{\int_{\RR^n} f^p\, dx}
                                           -\frac{p}{p-1} \frac{B_\sigma^{p-1}}{\int_{\RR^n} f B_\sigma^{p-1}\, dx}\,.
$$
Since $B_\sigma$ has the same mass and second moment of $f$, and $B_\sigma^{p-1}=A+D|x|^2$, with $A$ and $D$ constants, then for $p<1$ we get
$$
\frac{\delta \mRH_p(f|B_\sigma)}{\delta f} = \frac{p}{p-1} \frac{f^{p-1}}{\int_{\RR^n} f^p\, dx}
                                           -\frac{p}{p-1} \frac{B_\sigma^{p-1}}{\int_{\RR^n} B_\sigma^{p}\, dx}\,,
$$
that together with \eqref{ok1} and \eqref{pd} leads to
$$
\frac{\delta \mRH_p(f|B_\sigma)}{\delta f} = \frac{p}{p-1} \frac{f^{p-1}}{\int_{\RR^n} f^p\, dx}
                                           +\frac{n}{2E_0} |y|^2\,.
$$
Therefore, the scaled equation \eqref{FPR} is the formal gradient flow of the relative R\'enyi entropy functional $\mRH_p(f|B_\sigma)$ with respect to the Euclidean Wasserstein distance for $\bar p<p<1$. Moreover, due to the formula \eqref{re-r}, we can write the relative R\'enyi entropy functional $\mRH_p(f)$ as
$$
\mRH_p(f|B_\sigma) = \frac1{1-p} \log \left(\frac{\int_{\RR^n} B_{\sigma}(x)^p \,dx}{(p-1)\mathcal{E}_p(f)}\right)\,,
\mbox{ with }
\mathcal{E}_p(f)=\frac{1}{p-1} \int_{\RR^n} f(x)^p \,dx\,.
$$
We know from the by now classical McCann's condition, that the functional $\mathcal{E}_p(f)$ is displacement convex for $(n-1)/{n}\leq p <1$. Therefore, it is trivial to check that the relative R\'enyi entropy functional $\mRH_p(f|B_\sigma)$ is displacement convex in the same range, provided $p>\bar p$ for being well-defined, since it is the composition of convex functions. Therefore, all the standard theory of gradient flows in probability measures of geodesically convex functionals, as in \cite{AGS}, applies directly to the scaled equation \eqref{FPR} in the range $\max(\frac{n-1}{n},\frac{n}{n+2})<p<1$. This breaks down for $p>1$ since \eqref{re-r2} is no longer true.

The previous discussion, coupled with the results of this work, highlight a strong and new connection between nonlinear Fokker-Planck type equations and R\'enyi entropies. In particular, they furnish a mathematical structure to the statistical framework introduced in \cite{FD} to introduce and justify this type of equations.


\bigskip
\noindent {\bf Acknowledgments:} JAC acknowledges support from projects MTM2011-27739-C04-02,
2009-SGR-345 from Ag\`encia de Gesti\'o d'Ajuts Universitaris i de Recerca-Generalitat de Catalunya, the Royal Society through a Wolfson Research Merit Award, and the Engineering and Physical Sciences Research Council (UK) grant number EP/K008404/1. GT acknowledges support from MIUR project ``Optimal mass transportation, geometrical and functional inequalities with applications'', and from the GNFM group of National Institute of High Mathematics of Italy (INDAM).


\end{document}